\begin{document}

\newtheorem*{theorem_A}{Theorem A}
\newtheorem{theorem}{Theorem}[section]
\newtheorem{lemma}[theorem]{Lemma}
\newtheorem{corollary}[theorem]{Corollary}
\newtheorem{conjecture}[theorem]{Conjecture}
\newtheorem{proposition}[theorem]{Proposition}
\newtheorem{question}[theorem]{Question}
\newtheorem*{answer}{Answer}
\newtheorem{problem}[theorem]{Problem}
\newtheorem*{main_theorem}{Main Theorem}
\newtheorem*{claim}{Claim}
\newtheorem*{criterion}{Criterion}
\theoremstyle{definition}
\newtheorem{definition}[theorem]{Definition}
\newtheorem{construction}[theorem]{Construction}
\newtheorem{notation}[theorem]{Notation}
\newtheorem{convention}[theorem]{Convention}
\newtheorem*{warning}{Warning}
\newtheorem*{assumption}{Simplifying Assumptions}

\theoremstyle{remark}
\newtheorem{remark}[theorem]{Remark}
\newtheorem{example}[theorem]{Example}
\newtheorem{scholium}[theorem]{Scholium}
\newtheorem*{case}{Case}

\def\id{\text{id}}
\def\H{\mathbb H}
\def\Z{\mathbb Z}
\def\N{\mathbb N}
\def\R{\mathbb R}
\def\C{\mathbb C}
\def\Q{\mathbb Q}
\def\CP{{\mathbb{CP}}}
\def\F{\mathcal F}

\def\length{\textnormal{length}}
\def\area{\textnormal{area}}
\def\rot{\textnormal{rot}}
\def\link{\textnormal{link}}
\def\wind{\textnormal{wind}}

\newcommand{\marginal}[1]{\marginpar{\tiny #1}}

\title{Taut Foliations leafwise branch cover $S^2$}
\author{Danny Calegari}
\address{Department of Mathematics \\ University of Chicago \\
Chicago, Illinois, 60637}
\email{dannyc@math.uchicago.edu}
\date{\today}

\begin{abstract}
A co-oriented foliation $\F$ of an oriented 3-manifold $M$ is taut if and only if there is a map
from $M$ to the 2-sphere whose restriction to every leaf is a branched cover.
\end{abstract}

\maketitle

\setcounter{tocdepth}{1}
\tableofcontents

\section{Introduction}

Let $M$ be a closed, oriented 3-manifold, and let $\F$ be an oriented, co-oriented
foliation of $M$ with 2-dimensional leaves. There are many equivalent definitions of
tautness for $\F$, including:
\begin{enumerate}
\item{there is a compact 1-manifold $Y$ (i.e.\/ a finite union of circles) 
transverse to $M$ such that every leaf intersects $Y$;}
\item{there is a closed 2-form $\omega$ which restricts to an area form on every leaf; or}
\item{no finite union of (positively oriented) closed leaves is homologically trivial.}
\end{enumerate}
See e.g.\/ \cite{Calegari_book} Chapter 4.

Here is another characterization of tautness, new as far as I know:
\begin{theorem}[Branched Covering]\label{Theorem_A}
An oriented/co-oriented foliation $\F$ of a 3-manifold $M$ is taut if and only if
there is a map $\phi:M \to S^2$ whose restriction to every leaf is a branched cover.
\end{theorem}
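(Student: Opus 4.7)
For the ``if'' direction, fix a positive area form $\alpha$ on $S^2$ and put $\omega:=\phi^*\alpha$, a closed $2$-form on $M$. Because $\phi|_L$ is an orientation-preserving branched cover between oriented surfaces, $\omega|_L=(\phi|_L)^*\alpha$ is a non-negative $2$-form on each leaf, vanishing only at the discrete branch points of $\phi|_L$. For any closed positively-oriented leaf $L_0$ we have $\int_{L_0}\omega=\deg(\phi|_{L_0})\int_{S^2}\alpha>0$, and the same holds for any positively-oriented finite union of closed leaves. Hence no such union is null-homologous in $M$, verifying tautness criterion~(3).

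For the harder ``only if'' direction, I plan to construct $\phi$ from the transversal data provided by tautness. By criterion~(1), choose a transverse closed $1$-manifold $Y\subset M$ meeting every leaf. A short push of $Y$ along a smooth vector field transverse to $\F$ yields a disjoint parallel copy $Y'$ also meeting every leaf. Fix north and south poles $N,S\in S^2$; my goal is to arrange $\phi^{-1}(N)=Y$ and $\phi^{-1}(S)=Y'$. Choose tubular neighborhoods $\nu(Y)\cong Y\times D^2$ and $\nu(Y')\cong Y'\times D^2$ whose $D^2$-fibers lie inside leaves (available since $Y,Y'$ are transverse to $\F$), and define $\phi$ on these by projecting to $D^2$ and identifying $D^2$ with a small disk around $N$ (resp.\ $S$). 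Then $\phi|_L$ is a local diffeomorphism near every point of $L\cap(Y\cup Y')$.

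To extend $\phi$ across $M_0:=M\setminus(\nu(Y)\cup\nu(Y'))$ into the equatorial cylinder $S^2\setminus\{N,S\}\cong S^1\times\R$, I use the closed $2$-form from criterion~(2), scaled so that its cohomology class is integral and Poincar\'e-dual to $[Y]=[Y']$. A transverse $2$-chain $\Sigma$ cobounding $Y$ and $Y'$ then gives an ``angular'' coordinate $\theta:M_0\to S^1$ via signed intersection with $\Sigma$ along paths, and a transverse flow provides a ``radial'' coordinate $r:M_0\to\R$. These combine to a candidate $\phi:M_0\to S^1\times\R$ that glues to the local models near $Y,Y'$.

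The main obstacle is verifying that the resulting $\phi$ restricts to a branched cover on every leaf simultaneously. Compact leaves of higher genus force additional interior branching by Riemann--Hurwitz; this branching has to appear as the critical set of $\phi|_L$ inside $L\cap M_0$, and must be discrete on every leaf. Arranging $\Sigma$ and the transverse flow so that the resulting leafwise critical loci are discrete requires careful flow-box constructions and exploits the interplay among criteria~(1), (2), and the existence of a volume-preserving transverse flow. The equivalence of these criteria under tautness is exactly what makes the interpolation possible, and turning this into a uniform statement is the technical heart of the argument.
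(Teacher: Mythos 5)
Your ``if'' direction is correct, and it is a slightly different (and perfectly legitimate) route from the paper's: you verify tautness criterion~(3) by integrating the pulled-back area form $\phi^*\alpha$ over closed leaves, whereas the paper observes that the simple branch locus is a compact $1$-manifold transverse to $\F$ meeting every non-spherical leaf, and handles the spherical case by Reeb stability. Either argument works; yours avoids perturbing to simple branch points, the paper's verifies criterion~(1) directly.

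The ``only if'' direction, however, has a genuine gap, and you have in effect flagged it yourself: nothing in your construction forces the map on $M_0$ to restrict to a \emph{branched cover} on every leaf. A smooth map from a surface to a surface built from an ``angular'' coordinate (intersection with a $2$-chain $\Sigma$) and a ``radial'' coordinate (a transverse flow parameter) will generically have fold and cusp singularities along curves in each leaf, not isolated critical points of local model $z\mapsto z^n$; eliminating leafwise folds simultaneously on all leaves is precisely the hard content of the theorem, and your sketch contains no mechanism for it. (There is also a smaller incoherence: the closed $2$-form of criterion~(2) cannot simply be ``scaled so that its cohomology class is Poincar\'e dual to $[Y]$''---its class is whatever it is, and is unrelated to $[Y]$ in general.) The paper supplies exactly the missing mechanism in two ways. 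Analytically, leafwise branched covers are produced as ratios of (almost) holomorphic sections of a line bundle, via Ghys/Candel in the hyperbolic-leaf case or via Eliashberg--Thurston/Bowden filling plus Donaldson's Lefschetz pencil theorem in general; holomorphicity is what guarantees isolated critical points of the right local form. Combinatorially, the paper either extends a leafwise immersion over the $2$-skeleton of an adapted triangulation and then \emph{plumbs} along transverse loops (furnished by tautness) through each $3$-simplex, raising the leafwise degree until the $1$-parameter family of immersed boundary loops bounds a family of branched immersed disks, or else builds leafwise Delaunay triangulations with vertices on the transversal and maps them out by canonical Belyi maps, interpolating through branched maps at the finitely many $2$--$2$ moves. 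Your outline uses the transversal $Y$ only to prescribe the fibers over the two poles; to complete the proof you would need an analogue of one of these degree-raising or rigid combinatorial devices that controls the leafwise singularity structure everywhere, and that is not something the interplay of criteria~(1)--(3) provides by itself.
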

This means that locally, the restriction of $\phi$ to each leaf looks
like $z \to z^n$ for some $n>0$ (after perturbation, we may take $n=2$).
Note that taut foliations $\F$ cannot necessarily 
be assumed to be smooth, though we can always assume
(after isotopy) that the individual {\em leaves} are smooth 
(see e.g.\/ \cite{Calegari_smooth} or \cite{Kazez_Roberts}) and we can insist
that $\phi$ as above is smooth.

\begin{proof}
Suppose there is such a map $\phi$.
The branch locus of $\phi$ is discrete in each leaf, and after adjusting $\phi$ by
a small perturbation we may assume that every branch point has local degree 2. We call such branch
points {\em simple}; they correspond to simple zeros of $d\phi$ where we think of
$\phi$ as a (leafwise) holomorphic function of one complex variable. If branch points
are all simple, the set of branch points sweeps out a compact 1-manifold $X$ 
transverse to $\F$. Say that $\phi:(M,\F) \to S^2$ is in {\em general position} 
under such a circumstance.

We claim that either $M=S^2\times S^1$ foliated as a product by $S^2\times\text{point}$,
or the branch locus $X$ intersects every leaf of $\F$. In either case it follows that
$\F$ is taut. Here is the proof.
Let $\lambda$ be a leaf. Since $M$ is compact, the map $\phi$ is bilipschitz on leaves away
from the $\epsilon$-neighborhood of the branch locus. So if $\lambda$ were disjoint
from $X$, the map $\phi:\lambda \to S^2$ would be a covering map, and therefore we would
have $\lambda=S^2$. 
The Reeb Stability Theorem (see \cite{Calegari_book} Thm.~4.5) says that if $\F$ has a spherical
leaf, every leaf is a sphere and (assuming $M$ is orientable) $\F$ is the product foliation. 

\medskip

The harder direction is to construct the map $\phi$ if $(M,\F)$ is taut. I know
of two analytic methods that accomplish this, neither of them trivial:

\medskip

{\bf Method 1:} Ghys \cite{Ghys} Thm.~7.5 shows that if $\F$ is a taut foliation with all 
leaves conformally hyperbolic, there is a complex line bundle $L$ over $M$ which is
leafwise holomorphic and which admits many leafwise holomorphic sections. The ratio 
of two such sections defines a map from $M$ to $\CP^1$ which is leafwise
holomorphic and nonconstant, and therefore a leafwise branched cover.

\medskip

Ghys' argument depends on work of Candel \cite{Candel} and 
generalizes Poincar\'e's construction of holomorphic functions via hyperbolic geometry.
Unfortunately it does not seem to apply directly when $\F$ has leaves of mixed conformal
type. However there is a second method which does apply in this case:

\medskip

{\bf Method 2:} Donaldson \cite{Donaldson} shows that if $(W,\omega,J)$ is a symplectic manifold
with integral periods and a compatible almost-complex structure $J$ then $W$ 
has the structure of a Lefschetz pencil --- some positive multiple $n\omega$ is the first Chern
class of a line bundle $L$ which has many `almost' holomorphic sections, and the
ratio of two such sections defines a map from $W$ to $\CP^1$ which is `almost' 
(depending on $n$) holomorphic on $J$-holomorphic curves.

Now, Eliashberg--Thurston \cite{Eliashberg_Thurston} (for $C^2$ foliations)
and Bowden \cite{Bowden} (in general) show that if $(M,\F)$ is taut then
either $M=S^2\times S^1$ foliated by spheres (in which case the proof is immediate), 
or there is a symplectic almost-complex structure on $M\times [-1,1]$ with
pseudo-convex boundary, for which the foliation $\F$ of $M\times 0$ is $J$-holomorphic.
Independently Eliashberg \cite{Eliashberg} and Etnyre \cite{Etnyre} showed that
$M\times [-1,1]$ as above can be filled to a closed symplectic manifold $(W,\omega,J)$. 
Now apply Donaldson's theorem.
\end{proof}
Though elegant, neither analytic argument is completely satisfactory. Ghys'
technique does not apply in full generality, and Donaldson's technique goes via a rather
convoluted path. Moreover, both constructions depend on taking some parameter
(roughly speaking, the first Chern class of an appropriate line bundle) sufficiently
large, and there is no {\it a priori} control over how large is `sufficient'.
Finally, the characterization of tautness in Theorem~\ref{Theorem_A} is basically
combinatorial, and one would like to have a purely combinatorial construction.
Therefore we shall give two direct combinatorial constructions of $\phi$ in
\S~\ref{combinatorial}, both of which can be made effective, in a certain sense.

\section{Examples}\label{finite_depth}

First we give explicit constructions of $\phi$ in some simple special cases.

\subsection{Surface bundles}

The simplest class of taut foliations are surface bundles. Let $R$ be a closed oriented
surface, and let $\psi:R \to R$ be an orientation-preserving diffeomorphism. The mapping
torus $M_\psi$ is foliated by copies of $R$ (i.e.\/ fibers of the obvious fibration over
$S^1$), and this foliation is evidently taut.
We shall construct a map $\phi:M_\psi \to S^2$ which restricts on each fiber to
a degree $d$ branched covering map with $4(d-1)$ simple branch points. Here's how to
do it. 

The data of a degree $d$ branched cover $R \to S^2$ with $n$ simple branch points is 
given by the images $P_i$ of the branch points, and a connected degree $d$ cover $R'$ of
$S^2 - \cup_i P_i$ whose monodromy around each $P_i$ is a transposition $\sigma_i$. 
To define a cover, the product of the $\sigma_i$ must be the identity, and for the
cover to be connected, the $\sigma_i$ must generate a group acting transitively on
the $d$-points. We compute $\chi(R)=2d-n$ so if $n=4(d-1)$ the genus of $R$ is $(d-1)$.

Explicitly, let's partition the $4(d-1)$ points $P_i$ into $(d-1)$ sets of
consecutive 4 indices $1234, 5678, \cdots$ and let $\sigma_j$ be the transposition
that permutes $i$ and $i+1$ for $j$ in the $i$th set of 4 indices
$4i-3 \le j\le 4i$. Then $\prod_i \sigma_i=\id$ and the action is evidently transitive.

Each 4-tuple of indices contributes 1 handle to $R$; braiding each set of 4 points
in $S^2$ locally effects Dehn twists in the meridians and longitudes of these
handles on $R$. If $4i-3\cdots 4i, 4i+1 \cdots 4i+4$ is an adjacent pair of 4-tuples, then
braiding $4i-1\, 4i$ around $4i+1\, 4i+2$ effects a Dehn twist along the curve linking
a pair of adjacent handles on $R$. As is well known, this set of Dehn twists generates
the mapping class group of $R$, so we can realize the monodromy $\psi$ by a suitable
braid of $4(d-1)$ points in $S^2$.

\subsection{Finite depth foliations}

If $\F$ is a (taut) foliation, a leaf $\lambda$ has {\em depth 0} if and only if it is closed, 
and {\em depth $\le k$} if and only if the closure $\overline{\lambda}$ is equal to the union of 
$\lambda$ together with leaves of depth $\le k-1$. The foliation has depth $\le k$ if
every leaf has depth $\le k$, and is {\em finite depth} $k$ if it has depth $\le k$ for 
some least $k$. Thus, a foliation has depth 0 if and only if the leaves are the fibers of
a fibration over the circle.

Denote the union of leaves of depth $\le k$ by $\F_k$. This is a closed set.
If $\F$ has finite depth $k$, we can always adjust the foliation so that there are
finitely many leaves of depth $<k$, and the complement of the closed set $\F_{k-1}$ is
an open manifold which has the structure of a surface bundle over $S^1$ with fiber a 
(typically noncompact, and possibly disconnected) leaf of depth $k$.

Let's consider the depth $1$ case. $\F_0$ is a finite set of closed surfaces, and the
complementary regions fiber over $S^1$. Let $U$ be a complementary region, i.e.\/ a surface
bundle over $S^1$ with noncompact fiber. The fiber
$\lambda$ is proper in $U$, so that it has finitely many ends, each of which spirals
around a leaf of $\F_0$. In other words, the ends look like infinite cyclic covers
of closed surfaces, and the monodromy $\psi$ restricts on each end to the deck
transformation of this cover.

We can write $\lambda$ as $\lambda:= \lambda_+ \cup \lambda_0 \cup \lambda_-$ as follows.
Let $R$ and $L$ be the (possibly disconnected) compact surfaces that the ends of
$\lambda$ spiral around in the positive and negative direction, 
and let $\overline{R}:= \cup_{i \in \Z} R_i$ and $\overline{L}:=\cup_{i \in \Z}
L_i$ be infinite cyclic covers, expressed as a union of compact subsurfaces $R_i$ and $L_i$
so that the action of the deck group is $R_i \to R_{i+1}$ and $L_i \to L_{i+1}$. Then
$\lambda_+ = \cup_{i>0} R_i$ and $\lambda_- = \cup_{i<0} L_i$, and $\psi$ acts on
$\lambda_+$ and $\lambda_-$ like the restriction of the deck action of the cover, glued
together along their boundaries by some homeomorphism 
$\psi_0: L_{-1} \cup \lambda_0 \to \lambda_0 \cup R_1$.
Note that $\chi(L_i)=\chi(R_j)$ is independent of $i$ and $j$.

A branched cover $\phi_R:R \to S^2$ branched over a subset $X$ lifts to a deck group
invariant map $\phi_{\overline{R}}:\overline{R} \to S^2$
branched over finite subsets $X_i \subset R_i$ permuted by the deck group, and similarly
we can get maps $\phi_{\overline{L}}$ 
and branch points $Y_i \subset L_i$, where we choose maps with the same
degree so that $X_i$ and $Y_i$ have the same cardinality. Extend $L_{-1} \to S^2$ branched over
$X_{-1}$ to some $\phi_0:L_{-1} \cup \lambda_0 \to S^2$ branched over $X_{-1} \cup Z$ for a finite
subset $Z \subset \lambda_0$, compatible with the obvious maps on the boundary. Restricting
$\phi_0$ to $\lambda_0$ and extending it over $R_1$ determines another branched map
$\phi_0':\lambda_0 \cup R_1 \to S^2$ branched over $Z \cup Y_1$, and we can compare the two
maps $\phi_0,\phi_0'\psi_0:L_{-1} \cup \lambda_0 \to S^2$. 

Any two simple branched maps 
from compact surfaces to $S^2$ of the same (sufficiently large) degree are isotopic through
such maps, and we can extend this isotopy end-periodically to an isotopy 
between branched maps $\phi,\phi\psi:\lambda \to S^2$.
The suspension of this isotopy determines a leafwise branched map 
$\phi$ from $U$ (the mapping torus of $\lambda$) to $S^2$ which extends continuously 
to the given maps $\phi_R,\phi_L$ on the boundary leaves $R$ and $L$. Performing this
extension on each complementary component defines a leafwise branched cover 
$(M,\F) \to S^2$.

\section{Two combinatorial proofs of the Branched Covering Theorem}\label{combinatorial}

In this section we give two combinatorial proofs of Theorem~\ref{Theorem_A}, valid in
full generality. The proofs are completely explicit and constructive, 
though profligate in a certain sense, which will be explained in \S~\ref{realization}.
Both proofs make direct use of the combinatorial definition of tautness as the
existence of a transverse 1-manifold $Y$ through every leaf.

\subsection{Triangulations} 

Several well-known constructions in the theory of foliations make use of an auxiliary
triangulation; for example, Thurston's well-known 
construction \cite{Thurston} of a foliation on a 3-manifold
in any homotopy class of 2-plane field is of this kind. Thurston constructs the foliation
locally over the 2-skeleton, but there is an obstruction to extending it naively over
the 3-simplices. This obstruction is overcome by drilling out a family of transverse loops
which bust through each 3-simplex and are transverse to the part of the foliation
constructed so far, solving the (now unobstructed) problem on 3-simplices, and `turbularizing'
the result over the drilled out loops (this creates Reeb components, which is unavoidable
in the context of Thurston's construction). Our first construction
bears a close family resemblance to Thurston's argument.

\begin{proof}
First, choose a triangulation $\tau$ of $M$ in such a way that the restriction of $\F$
to each simplex is (topologically) conjugate to the foliation of a linear simplex in 
$\R^3$ in general position. The existence of such a triangulation is easy: just work in
sufficiently small charts so that the foliation is nearly affine. Note that every edge
of the triangulation is transverse to $\F$, so that the co-orientation of $\F$ induces
an orientation on every edge compatible with a total order on the vertices of each
simplex. 

We can easily find a map $\phi:N(\tau^2) \to S^2$ from a neighborhood of the 2-skeleton to
$S^2$ which is actually an {\em immersion} when restricted to each leaf of $\F|N(\tau^2)$.
To see this, observe that the extension problem for $\phi$ over an $i$-simplex rel.
a choice of $\phi$ on a neighborhood of its boundary amounts to 
taking a solid cylinder $D^{i-1}\times I$ foliated by $D^{i-1} \times \text{point}$ and 
extending $\phi: \partial (D^{i-1}\times I)\to S^2$ to the interior so that the resulting
extension is an immersion on each horizontal $D^{i-1}\times \text{point}$ 
(the extension to a neighborhood of the simplex
in the 3-manifold is then obtained e.g.\/ by exponentiating the normal bundle of the image
leafwise). For $i\le 2$ the extension is easy: the $i=1$ case is just the problem of
joining two points in $S^2$ by a path, while the $i=2$ case is to find a
1-parameter family of immersed paths in $S^2$ with a specified family of boundary values 
interpolating between two given immersed paths. 
The latter problem can be solved, because the space of immersed paths
in $S^2$ joining any two points is nonempty and path-connected. 

The trouble comes only when we try to extend $\phi$ over the 3-simplices. Let's state the
extension problem explicitly. We have a map $\phi$ defined on the boundary of a solid cylinder
$\phi:\partial (D^2\times I) \to S^2$ so that the maps $\phi_i:D^2 \times i\to S^2$ on the top
and bottom disks are both immersions, and 
so are the 1-parameter family of maps $\partial \phi_t:\partial D^2\times t\to S^2$,
and we would like to find immersions $\phi_t:D^2 \times t$ extending $\partial \phi_t$
and interpolating between $\phi_0$ and $\phi_1$. No such extension exists in general.

\medskip

Here is where we use the hypothesis of tautness. For each simplex $\sigma$ we will find
a loop $Y_\sigma$ transverse to $\F$ with a certain property. We want $Y_\sigma$ to be
the union of two arcs transverse to $\F$. The first is an arc $\alpha$ in the interior of
$\sigma$ that runs from the disks near the top and bottom vertex where $\phi$ has been
defined. The second is an arc $\beta$ completely contained in the neighborhood $N(\phi)$ 
of the 2-skeleton where $\phi$ is already defined. It is easy to find such a $\beta$: by 
tautness, any $\alpha$ can be completed to a loop $\alpha \cup \beta'$ transverse to $\F$; 
now homotop $\beta'$ leafwise in each simplex until the result $\beta$ is contained in 
(a neighborhood of) the 2-skeleton. 

We now {\em plumb} the map $\phi$ along $\beta$. What does this mean? A neighborhood $N(\beta)$
of $\beta$ is foliated as $\text{disk} \times I$, and $\phi$ maps each disk in the product
to $S^2$ by an embedding. We modify $\phi$ on $N(\beta)$, keeping it fixed near the boundary,
by replacing each embedding $D \to S^2$ by a branched map $D \to S^2$ with $2n$ simple
branch points, so that relative to the embedding, the new map has some large fixed
(positive) degree $n$.

The net result is to change the boundary values of our extension problem: we have a new
map $\phi:\partial (D^2\times I) \to S^2$ so that $\phi_i:D^2 \times i \to S^2$ are
branched immersions of (positive) degree $n$, and a 1-parameter family of immersions
$\partial \phi_t:\partial D^2 \times t \to S^2$ interpolating between $\partial \phi_i$ for
$i=0,1$. Any fixed 1-parameter family of immersed loops in $S^2$ bounds a
family of immersed branched disks $D \to S^2$ of degree $n$ whenever $n$ is large enough,
so we can extend the modified $\phi$ over the interior of $\sigma$. Performing this extension
simplex by simplex, we obtain the desired map $\phi$.

Another way to say this (more in line with Ghys' perspective) is that the plumbing
operation increases the relative first Chern class $c_1$ for the boundary maps $\phi_i$
of disks to $S^2$, until the family $\partial \phi_t$ 
has a holomorphic extension over $D$. 
\end{proof}

\begin{remark}
If $\xi$ is a contact structure on $M$, the failure of integrability implies that
we can construct (locally) a circle in $M$ through any given point and transverse to
$\xi$. The construction above works essentially without change to give a 
branched map $\phi:(M,\xi) \to S^2$ transverse to {\em any} (co-orientable)
contact structure.

As mentioned earlier, Eliashberg--Thurston \cite{Eliashberg_Thurston} and Bowden \cite{Bowden}
show that if $\F$ is {\em any} co-orientable foliation of an orientable 3-manifold $M$,
then unless $M$ is $S^2 \times S^1$ or $M=T^3$ (and $\F$ is a foliation by planes),
then $\F$ can be $C^0$ approximated by positive and negative contact structures.

We can construct branched maps to $S^2$ for these approximating contact structures. 
However, these maps cannot in general be chosen in such a way as to limit to a 
leafwise branched map $\phi:(M,\F) \to S^2$ since $\F$ might not be taut.
\end{remark}

\subsection{Belyi maps}\label{Belyi_section}

If $R$ is a closed Riemann surface, a {\em Belyi map} $R \to \CP^1$ is a holomorphic map
branched only over the points $0,1,\infty$. It is a rather beautiful theorem of Belyi
that a Riemann surface may be defined by equations over $\overline{\Q}$ 
if and only if it admits a Belyi map.

If $R \to \CP^1$ is a Belyi map, we may decompose $\CP^1$ into two triangles 
(one black one white) and vertices at $0,1,\infty$, and then pull back to obtain
a triangulation of $R$.

Conversely, let $\tau$ be any triangulation $\tau$ of an oriented surface $R$
and let $\tau'$ be its first barycentric subdivision. For every triangle $\Delta$ of
$\tau'$ we can canonically order its vertices $0,1,2$ according to the
dimension of the simplex of $\tau$ they are at the center of. $\tau'$
has a canonical 2-coloring, according to whether the orientation of each
triangle $\Delta$ coming from this ordering agrees or disagrees with 
the orientation on $R$. Thus $R$ admits a canonical Belyi map to $\CP^1$ 
associated to $\tau'$.

Here is another combinatorial proof of Theorem~\ref{Theorem_A}, via Belyi maps.

\begin{proof}
Assume $\F$ is not the product foliation of $S^2 \times S^1$ by spheres (or else
the conculsion is obvious). Thus no leaf of $\F$ is a sphere.

Let $Y$ be a total transversal for $\F$. We want to define a certain kind of
combinatorial structure on $M$ compatible with $Y$. This structure is a leafwise
triangulation --- i.e.\/ a triangulation of each leaf --- with vertices precisely at
$Y$, and varying continuously transverse to $\F$ except at finitely many disjoint
`squares' $S$ contained in leaves of $\F$ where two triangulations are 
related by a $2$--$2$ move (i.e.\/ the move which switches the two diagonals of 
the square $S$).

It is probably easy to construct such a structure by hand, but it is convenient
to use a geometric argument. 
To show that such a structure exists, first observe that we can find a metric on
leaves of $\F$, continuously varying in the transverse direction, for which 
each leaf $\lambda$ has the structure of a complete hyperbolic orbifold 
with cone angle $\pi$ at the points of $\lambda \cap Y$. This is true leafwise in
any conformal class (because $\F$ has no spherical leaves and any noncompact leaf
intersects $Y$ in infinitely many points) and Candel \cite{Candel} shows that
leafwise uniformization is continuous (his argument applies without change to the
good orbifold case). 

Now for each leaf $\lambda$ we obtain a canonical triangulation with vertices at
$\lambda \cap Y$ by taking the dual of the Voronoi tessellation with centers at 
$\lambda \cap Y$. This is generically a triangulation, except at finitely many
isolated squares corresponding to 4-valent vertices of the Voronoi tessellation.
This proves the existence of the desired combinatorial structure, as claimed.

For each triangulated leaf, barycentrically subdivide and map to $S^2$ by the
canonical Belyi map. At the $2$--$2$ transitions we have two different 
Belyi maps of degree 6 on a square. We `cut open' $M$ along a neighborhood 
of this square, and insert an isotopy between these two Belyi maps through 
(non-Belyi) branched maps. This gives the desired map.
\end{proof}

\section{Homological Invariants}

A homotopy class of map $\phi:M \to S^2$ has certain homological invariants 
attached to it. We give formulae for these invariants for $\phi:(M,\F) \to S^2$
in terms of data associated to the geometry and topology of $\F$.

\subsection{Euler class}

Associated to an oriented/co-oriented taut foliation $\F$ of $M$ is the Euler
class $e(T\F)\in H^2(M)$. If $\phi:(M,\F) \to S^2$ is any map, we can pull back
the Euler class $e(S^2):=e(TS^2) \in H^2(S^2)$ under $\phi^*$ to get another
class $\phi^*e(S^2) \in H^2(M)$. Let's assume that $\phi$ is a generic leafwise branched
cover, and let 
$X$ be the (simple) branch locus, oriented by the co-orientation of $\F$. 
If $[X] \in H_1(M)$ denotes the homology class of $X$, let $[X]^* \in H^2(M)$ denote
its Poincar\'e dual. Then these classes are related as follows:

\begin{theorem}[Euler class formula]\label{Euler_formula}
Let $(M,\F)$ be oriented and co-oriented, let $\phi:(M,\F) \to S^2$ be a generic
leafwise branched cover, and let $X$ be the (simple) branch locus. Then
$$e(T\F) + [X]^* = \phi^*e(S^2)$$
Consequently $[X] \cap [\mu] > -e(T\F)[\mu]$ for every invariant tranverse measure $\mu$.
\end{theorem}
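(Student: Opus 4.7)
The plan is to identify the branch locus $X$ with the transverse zero set of a natural section of a complex line bundle on $M$, so that the formula becomes a first Chern class computation, after which the inequality follows by pairing with a transverse measure.

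First I would observe that, because $\F$ and $S^2$ are oriented, both $T\F$ and $TS^2$ are oriented 2-plane bundles, hence canonically complex line bundles on $M$ and $S^2$ respectively. Consider the leafwise differential of $\phi$, which we may take to be smooth by the remark on smoothness of $\phi$ in the introduction. It gives a global section
\[
s := d\phi|_{T\F} \;\in\; \Gamma\bigl(\Hom_{\C}(T\F,\phi^{*}TS^{2})\bigr) \;=\; \Gamma\bigl(T\F^{*}\otimes_{\C}\phi^{*}TS^{2}\bigr).
\]
Where $\phi$ is a leafwise immersion, $s$ is nonzero, and at a simple branch point the local model $w=z^{2}$ makes $s$ vanish transversely with complex-linear derivative in the leaf direction. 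Since the transverse-to-$\F$ direction is tangent to the branch arc, the zero set of $s$ on $M$ is precisely the 1-manifold $X$, and $s$ is a transverse section of $L := T\F^{*}\otimes_{\C}\phi^{*}TS^{2}$.

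The standard theorem about transverse zeros of sections of complex line bundles then says $[X]^{*}=c_{1}(L)$, provided the co-orientations agree. Here I would check the orientation convention once, in the local model $(z,t)$ with $X=\{z=0\}$ and $\phi(z,t)=z^{2}+g(t)$: the normal bundle $\nu(X,M)$ is spanned by $\partial_x,\partial_y$, i.e.\ it equals $T\F|_X$, and the linearization of $s$ sends $\partial_z$ to a nonzero element of $L|_X$ $\C$-linearly, so the complex orientation on $L$ induces the complex (leafwise) orientation on $\nu(X)=T\F|_X$, which by the orientation of $M$ gives $\partial_t$ as the orientation of $X$ — exactly the co-orientation of $\F$ restricted to $X$. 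Hence
\[
[X]^{*} \;=\; c_{1}(L) \;=\; -c_{1}(T\F) + \phi^{*}c_{1}(TS^{2}) \;=\; -e(T\F) + \phi^{*}e(S^{2}),
\]
which rearranges to the Euler class formula.

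For the consequence, I would pair the formula with the de Rham/Ruelle--Sullivan class $[\mu]\in H_2(M;\R)$ of an invariant transverse measure, giving
\[
e(T\F)[\mu] + [X]\cap[\mu] \;=\; \phi^{*}e(S^{2})[\mu].
\]
The right-hand side is $2\cdot [\phi^{-1}(p)]\cap[\mu]$ for a regular value $p\in S^{2}$, using $e(TS^{2})=2[p]^{*}$. Because $\phi$ restricts to a branched cover of positive degree on every leaf, $\phi^{-1}(p)$ is a complete positive transversal to $\F$, so the transverse measure it carries is strictly positive whenever $\mu$ is nonzero; thus $\phi^{*}e(S^{2})[\mu]>0$, and the inequality $[X]\cap[\mu]>-e(T\F)[\mu]$ follows.

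The main thing to get right is the orientation bookkeeping in step two: one has to confirm that the co-orientation on $X$ coming from the complex structure on $L$ really is the co-orientation on $X$ coming from $\F$, since a sign error here would flip the formula. Everything else is a standard Chern class identity plus the observation that a leafwise branched cover to $S^{2}$ always pulls a point back to a positive complete transversal.
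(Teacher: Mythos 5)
Your proposal is correct, but it reaches the formula by a different (and in some ways cleaner) route than the paper. The paper pulls back a vector field $V$ on $S^2$ with index-one zeros at the poles (after perturbing so $\phi(X)$ misses the poles) and regards the result as a section of $T\mathcal{F}$ vanishing along $N\cup S\cup X$, so that the dual of $\phi^*e(S^2)$ appears as the polar contribution $N\cup S$ and $[X]^*$ as the correction along the branch locus; you instead take the leafwise derivative $d\phi|_{T\mathcal F}$ as a section of the line bundle $\Hom_{\C}(T\mathcal F,\phi^*TS^2)$, whose transverse zero locus is exactly $X$, and read off $[X]^*=c_1(\phi^*TS^2)-c_1(T\mathcal F)$. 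Your version avoids the auxiliary choice of $V$, the perturbation of $\phi$, and the local index computation along $X$ that the paper leaves implicit, at the cost of one point you should state explicitly: $d\phi|_{T\mathcal F}$ is only complex-linear once you equip $T\mathcal F$ with the leafwise complex structure pulled back from $S^2$ (extended over the branch points), and one should note that $e(T\mathcal F)=c_1(T\mathcal F)$ for any complex structure compatible with the orientation, so the choice is harmless — this is the same leafwise-holomorphic viewpoint the paper invokes when it says branch points are simple zeros of $d\phi$, and your orientation check in the local model is the right verification that $X$ inherits the co-orientation of $\mathcal F$. For the inequality the two arguments also differ: the paper smooths the pullback of the area form to get a closed $2$-form positive on leaves, hence positive against every Ruelle--Sullivan cycle, while you observe that $\phi^*e(S^2)$ is dual to $2\,\phi^{-1}(p)$ for $p$ outside $\phi(X)$, which is a closed transversal meeting every leaf (since $\phi$ is leafwise onto), so its transverse $\mu$-measure is strictly positive for $\mu\neq 0$. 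Both are standard facts from Sullivan's theory of foliation cycles; yours makes the positivity visibly combinatorial (a total transversal has positive measure), the paper's is the de Rham version of the same statement.
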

\begin{proof}
The class $e(S^2)$ is the obstruction to finding a nonzero vector field on $S^2$ and
the class $e(T\F)$ is the obstruction to finding a nonzero vector field on $T\F$. Choose 
a vector field $V$ with index 1 at the north and south poles $n,s \in S^2$ where if necessary
we perturb $\phi$ so that $\phi(X)$ is disjoint from $n$ and $s$. The preimage 
$N \cup S:=\phi^{-1}(n\cup s)$ is Poincar\'e dual to $\phi^*e(S^2)$. Furthermore, the
preimage of $V$ defines a section of $T\F$ that vanishes at $N\cup S \cup X$. This
proves the formula.

After smoothing the pullback of the area form of $S^2$ we obtain a closed 2-form
$\omega$ in the class of $\phi^*e(S^2)$ positive on $T\F$, and such a class must pair
positively with every invariant transverse measure $\mu$. This proves the inequality.
\end{proof}

We interpret the inequality $[X]\cap [\mu]> -e(T\F)[\mu]$. For this we must briefly
recall some facts from the theory of foliation cycles; see Sullivan \cite{Sullivan_cycles}.
It is common to use the abbreviation $\chi(\mu):=e(T\F)[\mu]$; 
for $\mu$ an atomic measure of mass 1 supported
on a closed leaf $R$, this is equal to the Euler characteristic $\chi(R)$.

Any invariant transverse  measure decomposes into ergodic components. 
If $\mu$ is an ergodic invariant transverse 
measure, then for any leaf $\lambda$ in the support of $\mu$ there is an exhaustion by compact
sets $D_i \subset \lambda$ with $\length(\partial D_i)/\area(D_i) \to 0$ so that the
de Rham currents $D_i/\area(D_i)$ converge to a positive multiple of $\mu$.

An invariant transverse measure $\mu$ has $\chi(\mu)>0$ if and only if some positive
measure of leaves are 2-spheres; in this case by Reeb stability the foliation is the
product foliation of $S^2\times S^1$. Otherwise, if some $\mu$ has $\chi(\mu)=0$ the support 
of the measure is conformally parabolic; since $M$ is taut, 
this implies that $M$ is necessarily toroidal. 

Thus if $M$ is atoroidal (i.e.\/ hyperbolic) $\chi(\mu)<0$ for every $\mu$. By
Candel \cite{Candel} there is a metric on $M$ (in every conformal class) for which leaves
of $\F$ have constant curvature $-1$. 
Let's rescale our invariant transverse measure so that $D_i/\area(D_i) \to \mu$. 
Then by Gauss--Bonnet, $e(T\F)[\mu]=-1/2\pi$ for this normalization, and  
$$[X]\cap [\mu] = \lim_{i\to \infty} \frac {|X \cap D_i|} {\area(D_i)}$$ 
We deduce the following corollary:
\begin{corollary}[Area inequality]\label{intersection_multiplicity}
Suppose $M$ is atoroidal, and $\phi:(M,\F) \to S^2$ is a generic leafwise branched cover
with simple branch locus $X$. Then for every leaf $\lambda$ in the support of an invariant
transverse measure, and any exhaustion $D_i$ of $\lambda$ by compact sets
with $\length(\partial D_i)/\area(D_i) \to 0$, we have 
$$\lim_{i \to \infty} \frac {|X \cap D_i|} {\area(D_i)} >  \frac {1} {2\pi}$$
as measured in the hyperbolic metric obtained by uniformizing $\lambda$.
\end{corollary}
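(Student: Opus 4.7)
The plan is to deduce the area inequality directly from the Euler class formula of Theorem~\ref{Euler_formula}, by translating its two cohomological pairings into geometric averages along the exhaustion $D_i$.

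First I would pass to an ergodic invariant transverse measure $\mu$: the leaf $\lambda$ lies in the support of some invariant transverse measure by hypothesis, and hence in the support of at least one ergodic component after ergodic decomposition; the conclusion is geometric and depends only on $\lambda$, so proving it for one such $\mu$ suffices. Since $M$ is atoroidal, the discussion preceding the corollary rules out $\chi(\mu)\geq 0$, so the support of $\mu$ is conformally hyperbolic, and Candel's leafwise uniformization puts the desired complete hyperbolic metric on $\lambda$.

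Next I would interpret both $[X]\cap[\mu]$ and $e(T\F)[\mu]$ as limits along $D_i$. After rescaling $\mu$ so that the de~Rham currents $D_i/\area(D_i)$ converge to $\mu$ (rather than to a positive multiple), the pairing of the Poincar\'e dual of the transverse $1$-manifold $X$ against $\mu$ becomes the geometric intersection average
\[
[X]\cap[\mu] \;=\; \lim_{i\to\infty} \frac{|X\cap D_i|}{\area(D_i)},
\]
because $X$ is a compact smooth curve transverse to $\F$ of uniformly bounded geometry and any contribution from a fixed neighborhood of $\partial D_i$ is absorbed by the assumption $\length(\partial D_i)/\area(D_i) \to 0$. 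Likewise $e(T\F)[\mu]$ equals the limit of $\chi(D_i)/\area(D_i)$; Gauss--Bonnet on the hyperbolic surface with boundary gives $2\pi\chi(D_i) = -\area(D_i) + \int_{\partial D_i}\kappa_g$, and the isoperimetric hypothesis kills the boundary term in the limit, yielding $e(T\F)[\mu] = -1/2\pi$ in this normalization.

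Plugging both identifications into the strict inequality $[X]\cap[\mu] > -e(T\F)[\mu]$ supplied by Theorem~\ref{Euler_formula} produces exactly the claimed bound. The only real content beyond invoking the Euler class formula is the dictionary between cohomological pairings and averages along a F\o lner-type exhaustion, which is standard in the Ruelle--Sullivan theory of foliation currents; the main point to verify in each identification is that the boundary contributions vanish in the limit under the hypothesis $\length(\partial D_i)/\area(D_i) \to 0$. I do not expect any genuinely difficult step.
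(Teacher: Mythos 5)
Your proposal is correct and follows essentially the same route as the paper: ergodic decomposition, exclusion of $\chi(\mu)\ge 0$ using atoroidality, Candel's leafwise uniformization, normalizing $\mu$ so that $D_i/\area(D_i)\to\mu$, Gauss--Bonnet giving $e(T\F)[\mu]=-1/2\pi$, and then the strict inequality of Theorem~\ref{Euler_formula}. There are no substantive differences from the argument in the text.
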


For any taut foliation $\F$ (transversely measured or not), 
and any transverse 1-manifold $X$ intersecting every leaf, 
then simply from the compactness of $M$ it follows that there are positive constants 
$C$ and $c$ so that for every point $p$ in every leaf $\lambda$ the ball $B_C(p)$
of radius $C$ in $\lambda$ about $p$ must intersect $X$ at least once, and the
ball $B_c(p)$ of radius $c$ must intersect $X$ at most once, so that for sufficiently
big $r$ the intersection numbers $|X\cap B_r(p)|/\area(B_r(p))$ are bounded above and
below by positive constants independent of $r$. 

\subsection{Hopf invariant}

Now suppose $M$ is an oriented homology sphere. For any map $\phi:M \to S^2$ in 
general position, the {\em Hopf invariant} $H(\phi)$ is the linking number of
two generic (suitably) oriented fibers. If $\phi:(M,\F) \to S^2$ is a leafwise branched cover,
we may compute $H(\phi)$ by taking any two points in $S^2$ outside the image of
the branch locus, and orient their preimage by the co-orientation to $\F$.

Let $n \in S^2$ be the north pole; let's assume $n$ is not in the image of the
branch locus. Let $N:=\phi^{-1}(n)$ be the collection of oriented loops in the preimage.
We can think of these as conjugacy classes in $\pi_1(M)$. 

Since $\F$ is taut and $M$ is a homology sphere, there are no invariant transverse
measures, so the leaves are all conformally hyperbolic. Thus
there is a {\em universal circle} $S^1_u$ associated to $\F$, and a representation
$\rho_u:\pi_1(M) \to \text{Homeo}^+(S^1_u)$.
Since $M$ is a homology sphere, there is a
well-defined {\em rotation quasimorphism} $\rot_\F:\pi_1(M) \to \R$. This is a class
function, and therefore well-defined on conjugacy classes, and it extends to a function
on (homotopy classes of) oriented 1-manifolds by additivity.
See e.g.\/ \cite{Calegari_book}
Chapter~7 or \cite{Calegari_Dunfield} for details.

We need to define one more invariant. For any oriented knot $K$ in $M$ transverse to $\F$ the
longitude $\ell$ determines a nonzero section of the torus of unit tangents $UT\F|K$. 
However, this circle bundle also carries a flat ($\text{Homeo}^+(S^1)$ valued) 
connection (up to monotone equivalence) coming
from the universal circle. Relative to the flat connection, $\ell$ has a well-defined
winding number, that we denote $\wind(K)$. For an oriented link define $\wind$ similarly by 
summing over components.

With this notation we have the following:

\begin{theorem}[Hopf invariant formula]
$$H(\phi) = \rot_\F(N) - \wind(N)+ \link(N,X)$$
\end{theorem}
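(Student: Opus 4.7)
The plan is to compute $H(\phi)=\link(N,S)$ by exhibiting $S=\phi^{-1}(s)$ as a framed pushoff of $N$ and identifying the resulting self-linking in terms of universal-circle data, together with the Euler class identity of Theorem~\ref{Euler_formula}.

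I would begin by choosing a regular value $s$ close to $n$ in the same component of $S^2\setminus\phi(X)$, joined to $n$ by a short arc $\gamma$. The preimage $\phi^{-1}(\gamma)$ is then a smooth surface cobordism in $M\setminus X$ between $N$ and $S$ that splits as a disjoint union of strips, one per component. This identifies $S$ as a componentwise normal pushoff of $N$ using the framing $f^\phi$ of the normal bundle $T\F|_N$ given by $d\phi^{-1}$ applied to a fixed frame at $n$. Consequently $H(\phi)$ equals the linking of $N$ with its $f^\phi$-pushoff.

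Next I would compute this linking on each component $N_i$ via three natural framings of $UT\F|_{N_i}$: the pullback framing $f^\phi$, the Seifert longitude $\ell$, and the multivalued ``flat reference'' coming from the universal-circle flat connection on $UT\F$ (whose monodromy around $N_i$ has rotation number $\rot_\F(N_i)$). By definition of $\wind$, the longitude $\ell$ winds by $\wind(N_i)$ relative to the flat reference. The central claim is that $f^\phi$ winds by exactly $\rot_\F(N_i)$ relative to the flat reference, reflecting that $d\phi$ is leafwise holomorphic and so intertwines with the conformal/universal-circle structure used to define $\rot_\F$. Differencing, the self-linking $\mathrm{sl}(N_i,f^\phi)=\rot_\F(N_i)-\wind(N_i)$.

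The term $\link(N,X)$ then arises when assembling the pieces into a genuine Seifert surface $\Sigma$ for $N$: the cobordism $\phi^{-1}(\gamma)$ gives only an ``open'' Seifert surface, and capping off $S$ using Theorem~\ref{Euler_formula} (which expresses $\phi^*e(S^2)-e(T\F)$ as $[X]^*$) identifies the intersection $\Sigma\cdot X$ with $\link(N,X)$. This contribution simultaneously records the cross linkings $\link(N_i,N_j)$ between distinct components of $N$ and the global twisting of $f^\phi$ relative to the leafwise-holomorphic structure near $X$, yielding the formula after summing over components.

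The main obstacle is the universal-circle identification in the second paragraph: verifying that the winding of the holomorphic framing $f^\phi$ relative to the flat reference is exactly $\rot_\F(N_i)$. This requires carefully unwinding Candel's leafwise uniformization and the construction of the universal circle, and checking that $d\phi$ intertwines the two structures.
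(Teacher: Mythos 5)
Your first reduction is fine and is essentially the same starting point as the paper's: $H(\phi)=\link(N,S)$ with $S=\phi^{-1}(s)$ realized as the pushoff of $N$ by the pullback framing $f^\phi$. The genuine gap is the ``central claim'' of your second paragraph, which you yourself flag as the main obstacle: that the winding of $f^\phi$ relative to the flat (universal circle) reference around each component is exactly $\rot_\F(N_i)$. As an exact per-component identity this cannot be right, and in fact it is inconsistent with the theorem you are trying to prove: combined with your first paragraph it gives $H(\phi)=\rot_\F(N)-\wind(N)+\sum_{i\neq j}\link(N_i,N_j)$, with no room left for the $\link(N,X)$ term (already when $N$ is a knot with $\link(N,X)\neq 0$). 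Note that both ``winding of a section rel the flat connection'' and $\rot_\F(N_i)$ are translation numbers of lifts of the same holonomy $\rho_u(N_i)$, so they agree modulo $\Z$ automatically; the entire content of the claim is which integer lift $f^\phi$ selects, and that integer is a global quantity. Leafwise holomorphicity of $d\phi$ is purely local and says nothing about it; the integer discrepancy is precisely what is measured by intersecting a Seifert surface for $N$ with the branch locus, i.e.\ by the relativized Euler class formula of Theorem~\ref{Euler_formula} ($d\phi$ identifies $T\F$ with $\phi^*TS^2$ only away from $X$). Your third paragraph gestures at this but gets the bookkeeping wrong: $\Sigma\cdot X=\link(N,X)$ is just the definition of linking number and does not ``record the cross linkings $\link(N_i,N_j)$''; the cross linkings and the branch-locus correction are different quantities and must be accounted for separately.

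For comparison, the paper avoids any boundary-local framing identity: it takes an immersed surface $R$ with $\partial R=N$ and evaluates two relative Euler classes on $[R]$ --- $\psi^*e(T\F)$ relative to the longitude trivialization, which equals $\rot_\F(N)-\wind(N)$ by identifying the bounded Euler class of the flat $UT\F$ with the coboundary of the rotation quasimorphism (this is also where the non-additivity of $\rot_\F$ over the components of $N$ is handled), and $\psi^*\phi^*e(S^2)$ relative to the boundary (crushed to $n$), which computes $H(\phi)$; their difference is $[X]\cap[R]=\link(N,X)$ by relativizing Theorem~\ref{Euler_formula}. If you want to salvage your framing-comparison route, the correct statement to aim for is not that $f^\phi$ winds by $\rot_\F(N_i)$ rel the flat reference, but that the total self-linking of $N$ with framing $f^\phi$ differs from the flat/quasimorphism prediction by exactly the count of zeros of $d\phi$ on a Seifert surface, i.e.\ by $\link(N,X)$ --- which is the Seifert-surface computation the paper does, not a statement along $N$ alone.
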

\begin{proof}
This is largely an exercise in unraveling definitions. Let $R$ be a compact oriented
surface immersed in $M$ with oriented boundary $N$, where $\psi:R \to M$ denotes the
immersion. The pullback of $UT\F$ to $R$ is an oriented circle bundle, which has a
flat connection (up to monotone equivalence) coming from the universal circle. 
The longitude $\ell$ gives a trivialization over the boundary, and hence an Euler class
$\psi^*e(T\F) \in H^2(R,\partial R)$, and the evaluation on the fundamental class
$\psi^*e(T\F)[R]$ is equal to the difference $\rot_\F(N) - \wind(N)$.
To see this, we must identify the (bounded) Euler class associated to a flat 
connection on a circle bundle with the coboundary of the rotation quasimorphism;
see e.g.\/ \cite{Ghys_circle} or \cite{Calegari_scl} \S~4.2.4--4.2.5.

Likewise, the composition of $\psi$ with $\phi$ crushes $\partial R$ to the point $n$, so we can
pull back $\psi^*\phi^*e(S^2)$ to a class in $H^2(R,\partial R)$. This class and
$\psi^*e(T\F)$ both pair with the fundamental class $[R]$, and the difference
is equal to the (algebraic) intersection of $R$ with $X$, by relativizing 
Theorem~\ref{Euler_formula}. Furthermore, $[X] \cap [R] = \link(\partial R,X) = \link(N,X)$ 
by definition of linking number in a homology sphere.
\end{proof}

The Milnor--Wood inequality (see \cite{Calegari_scl}) implies
$|\rot_\F(N)| \le -\chi(R)$ where $R$ is any 
compact oriented surface bounding $N$ without disk or sphere components.

If $M$ is merely a rational homology sphere, we obtain a similar formula by replacing
$N$ by a homologically trivial finite cover and dividing through the right hand side by
the degree.

\section{Which Euler classes are realized?}\label{realization}

In this section we ask more pointedly the question:
given $(M,\F)$, which Euler classes $\phi^*e(S^2)$ are realized by leafwise
branched covers $\phi:(M,\F) \to S^2$?
By Theorem~\ref{Euler_formula} this is equivalent to asking what homology
classes $[X]$ can be realized as the (simple) branch locus. 
Say a class $[X] \in H_1(M)$ is {\em realizable} if there is $\phi:(M,\F) \to S^2$ simply
branched over some $X$ in the given class. 

\subsection{Parity Condition} 

Both $e(T\F)$ and $\phi^*e(S^2)$ are integral classes, and therefore so is
$[X]$. These classes are subject to a well-known parity condition:

\begin{proposition}[Parity]\label{parity}
Let $\phi:(M,\F) \to S^2$ be a generic leafwise branched cover. Then the
mod 2 reduction of $[X]$ is zero in $H_1(M;\Z/2\Z)$. Equivalently, $[X]=2s$ for
some $s \in H_1(M;\Z)$.
\end{proposition}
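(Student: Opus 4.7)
The plan is to deduce the parity of $[X]$ directly from the Euler class formula
$$e(T\F) + [X]^* = \phi^*e(S^2)$$
of Theorem~\ref{Euler_formula}, by showing that the other two classes are already even.

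First, the right-hand side: the Euler class $e(TS^2) \in H^2(S^2;\Z)$ equals $\chi(S^2) = 2$ times the positive generator, so $e(S^2)$ lies in $2H^2(S^2;\Z)$. Pulling back under $\phi^*$, it follows that $\phi^*e(S^2)$ is an even class in $H^2(M;\Z)$, i.e.\ its mod $2$ reduction in $H^2(M;\Z/2\Z)$ vanishes.

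Second, and this is the only nontrivial step, I claim $e(T\F)$ is also even. For an oriented 2-plane bundle, the mod $2$ reduction of the Euler class is the second Stiefel--Whitney class $w_2$, so it suffices to show $w_2(T\F) = 0$. Here I use the hypotheses on $(M,\F)$: since $\F$ is co-orientable and $M$ is orientable, the normal line bundle $\nu(\F)$ of $\F$ in $TM$ is trivial, and there is a splitting $TM \cong T\F \oplus \underline{\R}$. By the Whitney sum formula, $w_2(TM) = w_2(T\F)$. But every closed orientable $3$-manifold is parallelizable (Stiefel), so $w_2(TM) = 0$, whence $w_2(T\F) = 0$ and $e(T\F) \equiv 0 \pmod 2$.

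Combining, $[X]^* = \phi^*e(S^2) - e(T\F)$ is even in $H^2(M;\Z)$, and Poincar\'e duality (which commutes with reduction mod $2$) transports this to the statement that $[X]$ is even in $H_1(M;\Z)$, i.e.\ $[X] = 2s$ for some $s \in H_1(M;\Z)$. The only step requiring any foliation input beyond the Euler class formula itself is the co-orientability of $\F$, used to trivialize $\nu(\F)$; I expect this to be essentially the whole content of the argument, with the rest being bundle-theoretic bookkeeping.
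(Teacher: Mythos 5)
Your proof is correct and takes essentially the same approach as the paper's: both rest on the Euler class formula of Theorem~\ref{Euler_formula} together with the key observation that $e(T\F)$ is even because $w_2(T\F)=w_2(TM)=0$, using co-orientability of $\F$ and parallelizability of the oriented $3$-manifold $M$. The only cosmetic difference is that you argue evenness of $\phi^*e(S^2)$ upstream, from $e(TS^2)$ being twice the generator of $H^2(S^2;\Z)$, whereas the paper checks it by pairing with mod $2$ surface classes via the degree of $\phi$ restricted to a surface; both are fine.
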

\begin{proof}
Let $[S] \in H_2(M;\Z/2\Z)$ be any homology class.
Let $S$ be a closed immersed surface in $M$ representing $[S]$ (possibly non-orientable).
We need to show $[X] \cap [S]$ is equal to zero mod 2.

Since $\phi^*e(S^2)[S]$ is equal to $\chi(S^2)$ times the (mod 2) 
degree of $\phi:S \to S^2$ it follows that this number is even. Thus
$[X] \cap [S]$ has the same parity as $e(T\F)[S]$.

Now, for any orientable/co-orientable 2-plane field $\xi$ on an orientable 3-manifold
(integrable or not) the Euler class $e(\xi)$ is zero in $H^2(M;\Z/2\Z)$. For, 
the co-orientability of $\xi$ is equivalent to the triviality of the complement
$\xi^\perp$, and therefore the Stiefel-Whitney classes of $\xi$ and of 
$\xi + \xi^\perp = TM$ are equal. 
But every orientable 3-manifold is parallelizable, so $w_2(\xi)=w_2(M)=0$.
As is well-known, $w_2(\xi)\in H^2(M;\Z/2\Z)$ is the mod 2 reduction of 
$e(\xi)$ for an orientable plane bundle.
\end{proof}

By Theorem~\ref{Euler_formula} and Proposition~\ref{parity}, we have the constraints
\begin{enumerate}
\item{$[X]\cap [\mu] > -e(T\F)[\mu]$ for every invariant transverse measure $\mu$; and}
\item{$[X] \cap [S]$ is even for every class $[S] \in H_2(M;\Z/2\Z)$.}
\end{enumerate}

These necessary conditions are not sufficient:

\begin{example}[Closed leaves]
It is especially easy to see these conditions are insufficient when $\F$ contains
closed leaves. For example:
\begin{enumerate}
\item{If $R$ is a closed leaf of $\F$, then $\chi(R)+[X]\cap [R]=2d$ 
where $d$ is the degree of $\phi:R \to S^2$. If $R$ is not a sphere, the degree of
any branched cover must be at least $2$ so $\chi(R)+[X]\cap [R]\ge 4$.}
\item{If $\F$ is a surface bundle and $\phi$ maps every fiber with degree 2, then the
monodromy of $\F$ is conjugate into the hyperelliptic mapping class group. Thus there
are surface bundles with fibers $R$ of every genus $\ge 3$ 
for which $\chi(R)+[X]\cap [R]\ge 6$. }
\end{enumerate}
\end{example}

\subsection{Realizing transversals and plumbing}

Let $Y$ be an oriented 1-manifold winding positively transverse to $\F$ 
and intersecting every leaf.
We call such a $Y$ a {\em total transversal}. A total transversal $Y$ determines
a homology class $[Y]\in H_1(M)$ and it is natural to ask which classes are realized.

Such a $[Y]$ must of course pair positively with every invariant transverse measure
$\mu$ for $\F$. Projectively, this is the only obstruction: for every class
$\alpha \in H_1(M)$ with $\alpha \cap [\mu]>0$ for all $\mu$, there is some positive
integer $n$ and a total transversal $Y$ with $[Y]=n\alpha$. This is proved by the
method of foliation cycles, and is essentially a corollary of the Hahn--Banach
theorem; see e.g.\/ Sullivan \cite{Sullivan_cycles} for definitions
and details.

We observe that the set of $[X]$ realized as the branch locus is closed under
adding even multiples of total transversals:

\begin{proposition}[Plumbing]
Let $\phi:(M,\F) \to S^2$ be a generic leafwise branched covering with simple
branch locus $X$, and let $Y$ be a total transversal. Then there is 
a generic leafwise branch cover $\phi':(M,\F) \to S^2$ with branch locus $X'$
and $[X']=[X]+2[Y]$.
\end{proposition}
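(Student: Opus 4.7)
My plan is to apply the plumbing operation from the first combinatorial proof of Theorem~\ref{Theorem_A} directly to the transversal $Y$, using the smallest nontrivial plumbing parameter: I will modify $\phi$ only on a tubular neighborhood of $Y$, inserting exactly two simple branch points into each disk slice transverse to $Y$. First I would perturb $Y$ within its homology class so that it remains transverse to $\F$ and is disjoint from $X$, then choose a tubular neighborhood $N(Y) \cong D^2 \times Y$ thin enough that each disk slice $D^2 \times \{y\}$ lies in a leaf of $\F$, is disjoint from $X$, and is mapped by $\phi$ as an embedding onto a small topological disk in $S^2$; such an $N(Y)$ exists by compactness of $M$ and continuity of $\phi$.

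Next I would fix once and for all a smooth plumbing model $g \colon D^2 \to D^2$ which equals the identity on a collar neighborhood of $\partial D^2$ and which has exactly two interior critical points, each locally of the form $z \mapsto z^2$ in the leafwise conformal coordinates. Applying $g$ slicewise on $N(Y)$, I would set $\phi'(p) = \phi(g(p))$ for $p \in N(Y)$ and $\phi'(p) = \phi(p)$ for $p \notin N(Y)$. Because $g$ is the identity near $\partial D^2$, the two definitions agree across $\partial N(Y)$, so $\phi'$ is continuous and leafwise smooth. Since $\phi$ restricts to an embedding on each disk slice of $N(Y)$, the chain rule identifies the critical points of $\phi \circ g$ on each slice with those of $g$, and they remain simple branch points of $\phi'$ in the conformal structure inherited from the leaf.

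The new branch locus is the disjoint union $X' = X \sqcup X_Y$, where $X_Y \subset N(Y)$ is the 1-manifold traced out by the two critical points of $g$ as $y$ ranges over $Y$. Each point of $X_Y$ lies in a leaf of $\F$, and the critical points move transversely to $\F$ as $y$ varies, so $X_Y$ is transverse to $\F$ and inherits a positive orientation from the co-orientation. By construction the projection $X_Y \to Y$ is an unbranched 2-fold covering, so $[X_Y] = 2[Y]$ in $H_1(M;\Z)$, and hence $[X'] = [X] + 2[Y]$, as desired.

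The only step requiring nonroutine verification is the existence of the plumbing model $g$ --- a smooth self-map of $D^2$ that is the identity on a boundary collar and has exactly two interior simple branch points of holomorphic type. I expect this to follow from a standard Morse-style birth of a canceling fold pair, further perturbed to look holomorphic near the two criticals; indeed, it is essentially the same local model tacitly used in the first combinatorial proof of Theorem~\ref{Theorem_A}, so nothing new needs to be invented.
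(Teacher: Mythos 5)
Your overall strategy---modify $\phi$ only in a tubular neighborhood $D^2\times Y$, slicewise, so as to add two circles of simple branch points---is exactly the paper's (one-line) proof, but your specific local model does not exist, and this is not a deferrable routine verification. There is no smooth map $g:D^2\to D^2$ which is the identity on a collar of $\partial D^2$ and whose only critical points are two interior points of local form $z\mapsto z^2$. Indeed, glue such a $g$ to the identity on a complementary disk to get a map $F:S^2\to S^2$ all of whose critical points are of branch type; $F$ is then a branched covering of some degree $d$, and since $g(D^2)\subset D^2$, any point of the complementary disk has exactly one preimage, so $d=1$; Riemann--Hurwitz (or simply the fact that a degree-one branched cover is a homeomorphism) forces $F$ to have no branch points, a contradiction. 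The same count shows that the composition trick $\phi'=\phi\circ g$ can never work: since $\phi$ embeds each slice onto a small disk in $S^2$, any map of the form $\phi\circ g$ still has image inside that small disk, and a map of the slice agreeing with the embedding near its boundary whose image misses a point of $S^2$ has relative degree zero and hence admits no branch-type critical points at all. Your fallback (``a standard Morse-style birth of a canceling fold pair, further perturbed to look holomorphic'') also fails: fold points are not of the form $z\mapsto z^n$ with $n>0$ and cannot be perturbed to be, precisely because of the degree obstruction just described.

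The fix is what the paper does: the slicewise model must be a \emph{double branched disk} $D^2\to S^2$, i.e.\ a map agreeing with the embedding $\phi|_{D^2\times\{y\}}$ near $\partial D^2$ but wrapping over the \emph{entire} sphere with relative degree one and exactly two simple branch points (concretely, restrict $z\mapsto z^2$ on $\CP^1$ to the complement of a small disk about one preimage of a regular value, and adjust near the boundary to match $\phi$). With this replacement the rest of your argument---the choice of $N(Y)$ disjoint from $X$ with leafwise disk slices embedded by $\phi$, continuity across $\partial N(Y)$, transversality of the two new branch circles, and $[X_Y]=2[Y]$, hence $[X']=[X]+2[Y]$---goes through and coincides with the paper's proof.
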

\begin{proof}
Plumb $\phi$ along $Y$ leafwise by a double branched $D \to S^2$. This adds two
circles of branch points for each circle of $Y$.
\end{proof}

\subsection{Belyi realization}

Let's analyze the Euler class $\phi^*e(S^2)$ for the Belyi construction from
\S~\ref{Belyi_section}:

\begin{theorem}[Belyi realization]\label{Belyi_euler}
Let $M$ be orientable, let $\F$ be a co-orientable taut foliation,
and let $Y$ be any oriented total transversal.
Then there is a generic leafwise branched map $\phi$ with simple 
branch locus $X$ for which
$$[X]^* = 12\, [Y]^* - 13\, e(T\F)$$
\end{theorem}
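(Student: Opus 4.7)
The plan is to compute $\phi^*e(S^2) \in H^2(M)$ directly and then apply Theorem~\ref{Euler_formula}, which gives $[X]^* = \phi^*e(S^2) - e(T\F)$. Since $e(S^2) = 2[\mathrm{pt}]^*$, it is enough to determine $[\phi^{-1}(p)] \in H_1(M)$ for a regular value $p \in S^2$; this class is independent of $p$, since any two regular fibres are cobordant via the preimage of a generic path in $S^2 \setminus \phi(X)$.

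First I would introduce three natural transverse oriented 1-cycles in $M$ swept out by the leafwise Voronoi/Belyi data from \S\ref{Belyi_section}: the given transversal $Y$ (from vertices of $\tau$), the locus $E_{\mathrm{mid}}$ traced by midpoints of edges of $\tau(\lambda)$, and the locus $F_{\mathrm{cen}}$ traced by centres of triangles. These three loci are essentially the unperturbed fibres $\phi^{-1}(0)$, $\phi^{-1}(1)$, $\phi^{-1}(\infty)$, with local ramification equal to the valence $k_v$ at each vertex, $2$ at each midpoint, and $3$ at each centre. Choosing regular values $p_1$ and $p_\infty$ just outside small neighbourhoods of $1$ and $\infty$ (large enough to contain the nearby portion of $\phi(X)$), the local model of the Belyi map gives
$$[\phi^{-1}(p_1)] = 2[E_{\mathrm{mid}}], \qquad [\phi^{-1}(p_\infty)] = 3[F_{\mathrm{cen}}],$$
and the cobordism principle then yields $2[E_{\mathrm{mid}}] = 3[F_{\mathrm{cen}}]$ in $H_1(M)$.

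Second I would construct a leafwise vector field $V$ tangent to $\F$ whose restriction to each leaf $\lambda$ is the gradient of a Morse function with minima at the vertices of $\tau$, saddles at midpoints of edges, and maxima at centres of triangles; a leafwise index count returns $V - E + F = \chi(\lambda)$ as required. Being a section of the oriented plane bundle $T\F$, the oriented zero set of $V$ is Poincar\'e dual to $e(T\F)$, and the Poincar\'e--Hopf indices $+1,-1,+1$ give
$$[Y] - [E_{\mathrm{mid}}] + [F_{\mathrm{cen}}] = \mathrm{PD}(e(T\F)).$$
Eliminating $[E_{\mathrm{mid}}]$ from the two displayed relations yields $[F_{\mathrm{cen}}] = 2[Y] - 2\,\mathrm{PD}(e(T\F))$, whence $\mathrm{PD}(\phi^*e(S^2)) = 2[\phi^{-1}(p_\infty)] = 6[F_{\mathrm{cen}}] = 12[Y] - 12\,\mathrm{PD}(e(T\F))$. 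Dualising and substituting into Theorem~\ref{Euler_formula} gives the claimed formula $[X]^* = 12[Y]^* - 13\,e(T\F)$.

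The hard part will be constructing the vector field $V$ consistently across the finitely many 2--2 moves from \S\ref{Belyi_section}. At each such move two midpoints and two centres collide and two new ones are born while the vertex set is preserved; correspondingly the Morse function should be interpolated through a saddle-maximum cancellation followed by a saddle-maximum birth, matched to the death/birth of the corresponding components of $E_{\mathrm{mid}}$ and $F_{\mathrm{cen}}$, so that the oriented zero set of $V$ continues to realise $[Y] - [E_{\mathrm{mid}}] + [F_{\mathrm{cen}}]$ through the transition. A secondary bookkeeping issue is that after simplification of the branch locus, $\phi(X) \subset S^2$ is a 1-complex in a small neighbourhood of $\{0,1,\infty\}$ rather than those three points themselves, so the auxiliary regular values and cobordism paths must be chosen generically relative to this 1-complex.
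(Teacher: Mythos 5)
Your proposal is correct and lands on the same formula, but it reorganizes the computation in dual (homological) language rather than the paper's direct evaluation. The paper simply notes that each triangle of the original leafwise triangulation maps over $S^2$ with degree $3$, so contributes $6$ to the relative pullback of $e(S^2)$, and then feeds the combinatorial identity $t/2=v-\chi$ into Theorem~\ref{Euler_formula}; no fibres, vector fields, or Poincar\'e duals of auxiliary loci appear. You instead represent $\mathrm{PD}(\phi^*e(S^2))$ by twice a regular fibre, identify the fibres near $1$ and $\infty$ with $2[E_{\mathrm{mid}}]$ and $3[F_{\mathrm{cen}}]$, and realize $\mathrm{PD}(e(T\F))$ as $[Y]-[E_{\mathrm{mid}}]+[F_{\mathrm{cen}}]$ via a leafwise Morse gradient field; your two relations are exactly the dual forms of the paper's identities ($2e=3t$ and $v-e+t=\chi$), so the elimination reproduces $\phi^*e(S^2)=12[Y]^*-12\,e(T\F)$ and hence $[X]^*=12[Y]^*-13\,e(T\F)$. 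What your route buys is an explicit geometric picture of where the Euler class lives (the three critical loci) and it avoids the bookkeeping of relative trivializations on triangle boundaries; what it costs is that you must promote $E_{\mathrm{mid}}$ and $F_{\mathrm{cen}}$ to genuine closed transverse cycles and extend the vector field through the finitely many $2$--$2$ squares, where these loci are discontinuous --- you correctly flag this as the hard part, though note that at such a move only one edge midpoint (not two) and two triangle centres change, and you are also implicitly assuming the interpolating non-Belyi maps can be chosen with branch values staying near $\{0,1,\infty\}$ so that the fibre identifications persist; both points are fixable and are glossed at a comparable level in the paper's own argument. Finally, applying Theorem~\ref{Euler_formula} requires first perturbing the Belyi map to have simple branch locus, which is harmless since $\phi^*e(S^2)$ and the fibre class are homotopy invariants.
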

\begin{proof}
Apply the Belyi map construction from \S~\ref{Belyi_section} to obtain 
$\phi:(M,\F) \to S^2$. The branch locus is not simple, so we need to count with
multiplicity to determine $X$.

For each leaf $\lambda$ the intersection $\lambda \cap Y$ gives the vertices of the 
initial triangulation. After performing the barycentric subdivision, each triangle of the
original triangulation maps over $S^2$ with degree $3$, and therefore the pullback
$\phi^*e(S^2)[\tau]=6$ for each such triangle $\tau$ (relative to the obvious trivialization
on the boundary). For a closed surface and a triangulation with $t$ triangles and $v$
vertices we have $t/2 = v - \chi$. Thus, for every homology class $[S]$,
%If $\lambda$ is in the support of an ergodic
%invariant transverse measure $\mu$, and $D_i$ is an exhaustion of $\lambda$, then because
%$\length(\partial D_i)/\area(D_i) \to 0$ we have in the limit
$$\phi^*e(S^2)[S] = 12\,[Y] \cap [S] - 12\, e(T\F)[S]$$
and therefore
$$[X]^*= 12\,[Y]^* - 13\, e(T\F)$$
as claimed.
\end{proof}

In the Belyi construction we have traded efficiency for clarity. 
Thus the constants $12$ and $-13$
in this theorem are far from optimal; see \S~\ref{sketch}. The point is simply to obtain
constants independent of the triple $(M,\F,Y)$.

\begin{remark}
Ghys \cite{Ghys}, Thm.~7.5 shows for $\F$ with conformally hyperbolic leaves 
that for any transversal
$Y$ positively intersecting every leaf, there is {\em some} positive $n$ so that 
the line bundle $2K$ of leafwise quadratic differentials has meromorphic sections 
with poles of order at most $n$ along $Y$. In other words, there are (many) holomorphic
sections of the line bundle $2K+n[Y]$. If $L$ is a holomorphic
line bundle, the ratio $f/g$ of two holomorphic sections $f,g$ defines a holomorphic
map $f/g$ to $\CP^1$, and the pullback $(f/g)^*T\CP^1 = 2L$. Thus, for the maps
$\phi:(M,\F) \to S^2$ that Ghys constructs, $\phi^*e(S^2)=2n[Y]^*-4\,e(T\F)$ and
$[X]^* = 2n[Y]^*-5\,e(T\F)$.
\end{remark}

\subsection{Sketch of a construction}\label{sketch}

We sketch a construction to improve the constants in Theorem~\ref{Belyi_euler}.
Fix $(M,\F)$. Given a total transversal $Y$, we let $2Y$ denote 2 parallel copies of $Y$,
and label these copies $Y_0,Y_1$. We shall construct leafwise triangulations 
and transitions between them of bounded combinatorial complexity 
for which each triangle has two vertices on each of $Y_0,Y_1$. 

Here's how to do this. In each leaf $\lambda$ we have a two discrete sets of points with labels
$0,1$, each of which is a separated net. The first step is to construct an embedded
directed graph $\Gamma$ going from $0$ vertices to $1$ vertices. Join every $0$ vertex by
shortest oriented geodesics to all the $1$ vertices within some (bounded) distance $C$. 
When two oriented edges cross, we resolve the crossing in an orientation-preserving way and
straighten the result (this might collapse some previously distinct edges). We can perform
all such resolutions simultaneously and obtain in this way 
an embedded geodesic directed graph $\Gamma$ 
connecting $0$ vertices to $1$ vertices. The complement $\lambda - \Gamma$ might have 
nontrivial topology, but it has uniformly bounded thickness: every point is within
a uniformly bounded distance of $\Gamma$. Thus we may add geodesic 
directed edges between $0$ and $1$ vertices on $\Gamma$ to produce a new $\Gamma'$ for
which $\lambda - \Gamma'$ consists entirely of polygonal disks (of bounded valence).
Add the shortest directed geodesic edge at each stage (just for definiteness),
resolving crossings (if any) if necessary. If we add maximally many edges to 
produce $\Gamma'$, complementary regions will all be quadrilaterals. 
Put a vertex in the center of each quadrilateral and triangulate.

The resulting triangulation has a canonical Belyi map to $S^2$, by making the map
orientation-preserving on each triangle, and taking vertices
of $\lambda \cap Y_0$ and $\lambda \cap Y_1$ to $0$ and $1$.
If we are judicious, this construction can be done canonically (and hence continuously
from leaf to leaf) except at finitely 
many isolated places where a hexagon is decomposed in two different ways into two
quadrilaterals. Each decomposition determines a map from the hexagon to $S^2$ of
degree 4, and we can cut open and insert an isotopy of degree 4 maps interpolating
between them.

Every quadrilateral maps to $S^2$ with degree $2$, so $\phi^*e(S^2)[Q]=4$ for each
quadrilateral $Q$. A decomposition of a closed surface into $q$ quadrilaterals and $v$ vertices 
has $q = v-\chi$. The vertices leafwise represent $2Y$.
Thus $\phi^*e(S^2) = 8[Y]^*-4e(T\F)$ and $[X]^* = 8[Y]^*-5e(T\F)$.

\medskip

What would it take to move beyond this construction? Given a taut foliation $\F$ and
a total transversal $Y$, our goal is to realize a simple
branch locus $X$ satisfying $[X]^* = \alpha [Y]^* - \beta e(T\F)$ for 
$(\beta,\alpha)$ as small as possible in the lexicographic ordering. If $\alpha$
and $\beta$ are rational numbers (or even if $\beta$ is an integer and $\alpha$ is
an odd integer), this imposes divisibility conditions on
the classes $[Y]$ and $e(T\F)$, since $[X]^*$ is always integral and even.

Here is a modest attempt. Suppose we have a leafwise decomposition into quadrilaterals 
with vertices at $2Y$, as above. A {\em matching} is a pairing of quadrilaterals,
where each quadrilateral is paired with an adjacent one. The existence of a matching
implies that $e(T\F)$ and $2[Y]$ are even, but fortunately this condition is
always satisfied. Also necessary is to choose a matching which varies continuously
as a function of leaves, except at finitely many isolated locations.

Given a matching we can remove the edges of $\Gamma'$ separating matched quadrilaterals,
and produce a new graph $\Gamma''$ with vertices at $2Y$ whose adjacent regions are
all hexagons. In the associated Belyi map every hexagon maps to $S^2$ with degree $3$, 
and $2h = v - \chi$ so $\phi^*e(S^2) = 6[Y]^* - 3e(T\F)$. 

When does a leafwise matching exist? For closed leaves we can always find such a
matching. Also we can find a matching for leaves with exponential growth, since 
Hall's Marriage theorem lets us extend any given partial matching locally. 
For leaves of polynomial growth --- precisely those that have the support of
nontrivial invariant measures in their closures --- this combinatorial condition is
more subtle, and perhaps can't be solved in general for arbitrary $Y$.

\section{Acknowledgments}
I would like to thank Ian Agol, Lvzhou Chen, Nathan Dunfield, Benson Farb, 
Andras Juhasz, Pablo Lessa, Cliff Taubes and Mehdi Yazdi for useful feedback.

\end{document}